
  \def\misajour{December 21,  2010} %

\documentclass[10pt]{article}  

\usepackage{
amssymb, 
amsthm,
amsmath,
}

\usepackage[applemac]{inputenc}
\usepackage[pdftex]{graphicx}
\usepackage{hyperref} 
\usepackage{xcolor}

\def\atop#1#2{
\genfrac{}{}{0pt} {} 
{#1} 
{#2}}

\newtheorem{theorem}[equation]{Theorem}
\newtheorem{proposition}[equation]{Proposition}
\newtheorem{lemma}[equation]{Lemma}
\newtheorem{corollary}[equation]{Corollary}
\newtheorem{conjecture}[equation]{Conjecture}

\def\rmh{{\mathrm{h}}}
\def\rmL{{\mathrm{L}}}
\def\rk{{\mathrm{rk}}}

\def\calS{{\mathcal{S}}}
\def\calU{{\mathcal{U}}}
\def\calV{{\mathcal{V}}}
\def\uc{\underline{c}}
\def\un{\underline{n}}
\def\us{\underline{s}}
\def\ut{\underline{t}}
\def\uz{\underline{z}}
\def\uX{\underline{X}}
\def\ulambda{\underline{\lambda}}
\def\ugamma{\underline{\gamma}}

\def\bC{{\mathbb{C}}}
\def\bG{{\mathbb{G}}}
\def\bP{{\mathbb{P}}}
\def\bQ{{\mathbb{Q}}}
\def\bR{{\mathbb{R}}}
\def\bZ{{\mathbb{Z}}}

\def\Gammabar{\overline{\Gamma}}
\def\Qbar{\overline{\bQ}}
\def\Lie{{\mathrm{Lie}}}

 \begin{document}
 
   \hfill
    
    \null
    \vskip -3 true cm
    
    \noindent
{
 }
 
 \noindent
 Afrika Matematika, to appear
  \hfill
  {\it 
   \misajour}

     \vskip 3 true cm

    \begin{center}
 {\bf  
On the $p$-adic closure of a subgroup of rational points 
\\
on an Abelian variety}

 \smallskip
by

 \smallskip
{ \it
 Michel Waldschmidt\footnote{Université Pierre et Marie Curie (Paris 6), Paris, France}
 }

 \end{center}

  \medskip

\section*{Abstract}
In 2007, B.~Poonen (unpublished) studied the $p$--adic closure of a subgroup of rational points  on a commutative algebraic group. More recently, J.~Bellaïche asked the same question for the special case of Abelian varieties. These problems are $p$--adic analogues of a question raised earlier by B.~Mazur on the density of rational points for the real topology. For a simple Abelian variety over the field of rational numbers, we show that the actual $p$--adic rank is at least the third of the expected value. 

\bigskip
\noindent{\bf Acknowledgments} The  author wishes to take this opportunity to thank Jean Fresnel, who introduced him to $p$-adic transcendence problems long back. This research started thanks to a discussion with Bjorn Poonen in Tucson during the Arizona Winter School in March 2008. Further discussions on this subject with Cristiana Bertolin in Regensburg shortly afterwards were also useful. The motivation to write this paper was renewed by a correspondence with Joël Bellaïche early 2010 \cite{Bellaiche}, while the  author was visiting the Harish-Chandra Research Institute in Allahabad, where he had fruitful discussions with Chandan Singh Dalawat.

\section{Introduction} \label{Section:Introduction}
 
Let  $A$ be a simple Abelian variety over $\bQ$ of dimension $g$, $\Gamma$ a subgroup of $A(\bQ)$ of rank $\ell$ over $\bZ$, $p$ a prime number, 
$\log:A(\bQ_p)\rightarrow T_A(\bQ_p)$ the canonical map from the $p$--adic Lie group $A(\bQ_p)$ to the  $p$--adic Lie algebra $T_A(\bQ_p)$ (see \S~\ref{Subsection:p-adicLogarithm}) and $r$ the dimension of the $\bZ_p$--space spanned by $\log \Gamma$ in $T_A(\bQ_p)$. We have $r\le \min\{g, \ell\}$. 

\begin{conjecture}\label{Conjecture}
Under these hypotheses, $r=\min\{g, \ell\}$. 
\end{conjecture}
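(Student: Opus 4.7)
My plan is to apply the $p$-adic analogue of W\"ustholz's analytic subgroup theorem (due to Bertrand) to the product abelian variety $A^\ell$, and then to convert the resulting $\Qbar$-linear independence into the desired $\bQ_p$-linear independence. After multiplying by a positive integer (which changes neither $r$ nor $\ell$), I may assume that every element of $\Gamma$ reduces to the identity modulo $p$, so that $\log$ is defined on all of $\Gamma$. Fix a $\bZ$-basis $P_1,\dots,P_\ell$ of $\Gamma$ modulo torsion, set $u_i = \log P_i \in T_A(\bQ_p)$, and consider the vector $U = (u_1,\dots,u_\ell) \in T_{A^\ell}(\bQ_p) = T_A(\bQ_p)^\ell$, which is the logarithm of the rational point $(P_1,\dots,P_\ell) \in A^\ell(\bQ)$.

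Because $A$ is simple (restricting first to the generic case $\mathrm{End}(A) = \bZ$), every algebraic subgroup of $A^\ell$ is cut out by $\bZ$-linear relations on the coordinates; the $\bZ$-independence of the $P_i$ then forces the smallest algebraic subgroup of $A^\ell$ containing $(P_1,\dots,P_\ell)$ to be $A^\ell$ itself. The analytic subgroup theorem applied to $U$ then says that the smallest $\Qbar$-Lie subalgebra $W \subset T_{A^\ell}(\Qbar)$ with $U \in W \otimes_{\Qbar} \bQ_p$ is $T_{A^\ell}(\Qbar)$. Choosing a $\Qbar$-basis $e_1,\dots,e_g$ of $T_A(\Qbar)$ and writing $u_i = \sum_j u_{i,j}\, e_j$, this is precisely the statement that the $g\ell$ coordinates $u_{i,j} \in \bQ_p$ are $\Qbar$-linearly independent. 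The CM case, where $\mathrm{End}(A)\otimes\bQ$ is larger than $\bQ$, needs only to replace $\bZ$-relations by $\mathrm{End}(A)$-relations, and the conclusion is formally analogous.

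The main obstacle is the final step: deducing the $\bQ_p$-rank bound $r = \min(g,\ell)$ from the $\Qbar$-linear independence of the entries of the $\ell \times g$ matrix $(u_{i,j})$. In abstract linear algebra these two notions are genuinely unrelated: one can build matrices over $\bQ_p$ whose entries are $\Qbar$-linearly independent but whose $\bQ_p$-rank is as small as $1$ (take a rank-one outer product of two vectors with sufficiently generic transcendental entries). So the desired conclusion must use more than the bare output of the analytic subgroup theorem. The natural strategy is to embed $A^\ell$ into a larger product in which any hypothetical $\bQ_p$-linear relation $\sum a_i u_i = 0$ with $a_i \in \bQ_p$ would force the resulting auxiliary point to lie in a proper algebraic subgroup; iterating this embedding with careful control of the coefficients $a_i$ is the type of argument that yields the partial lower bound $r \ge \tfrac{1}{3}\min(g,\ell)$ announced in the abstract. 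Obtaining the full conjectural equality $r = \min(g,\ell)$ appears to require substantially new input beyond what the analytic subgroup theorem currently provides, and is closely tied to $p$-adic analogues of Leopoldt's conjecture.
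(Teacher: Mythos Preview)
The statement you are addressing is a \emph{conjecture} in the paper, not a theorem: the paper offers no proof of the full equality $r=\min\{g,\ell\}$. What the paper actually proves is the partial lower bound $r\ge \ell g/(\ell+2g)$ (its main theorem), via an interpolation-determinant transcendence argument; the conjecture itself is left open, with the remark that already for $G=\bG_m^{d}$ the analogous question is Leopoldt's conjecture. So there is no ``paper's own proof'' to compare against.

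Your proposal is likewise not a proof, and you say so yourself in the third paragraph. The gap you identify is genuine and fatal to the approach: the $p$-adic analytic subgroup theorem (granting its availability in the required generality), applied to $A^{\ell}$ and the point $(P_1,\dots,P_\ell)$, yields at best that the $g\ell$ coordinates $u_{i,j}$ of the logarithm matrix are linearly independent over $\Qbar$. The conjecture, however, asks that the $\bQ_p$-rank of the $\ell\times g$ matrix $(u_{i,j})$ be $\min\{g,\ell\}$. Your rank-one outer-product example shows precisely why the first statement does not imply the second, and this is not a technicality that can be patched: transcendence methods control linear relations among logarithms with \emph{algebraic} coefficients, whereas the conjecture concerns relations with \emph{$p$-adic} (hence in general transcendental) coefficients---a strictly harder problem that current methods do not resolve even in the multiplicative case. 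In short, your first two paragraphs set up machinery that your third paragraph correctly dismantles, and the statement remains open, exactly as the paper presents it.
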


This conjecture trivially holds for an elliptic curve ($g=1$). 

 The real analog of this conjecture is related with a conjecture of B.~Mazur
\cite{MR1330932}. See also the conjectures by Yves André
\cite{MR990016,MR2115000}.

\begin{theorem} \label{Theorem:MainTranscendenceResult}
We have 
$$
r\ge \frac{\ell g}{\ell+2g}\cdotp
$$

\end{theorem}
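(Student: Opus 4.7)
The natural approach is a $p$-adic transcendence argument combining a Dirichlet/pigeonhole step inside the $r$-dimensional subspace $V=\bQ_pu_1+\cdots+\bQ_pu_\ell$ of $T_A(\bQ_p)$ with a quantitative $p$-adic lower bound for logarithms of non-torsion rational points of $A$, in the spirit of the Baker--W\"ustholz method adapted to abelian varieties.

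\textbf{Setup.} Fix a free $\bZ$-basis $P_1,\ldots,P_\ell$ of a subgroup of finite index of $\Gamma$ and set $u_i=\log P_i\in T_A(\bQ_p)$. The kernel of the $p$-adic logarithm on $A(\bQ_p)$ is the torsion subgroup, so $u_1,\ldots,u_\ell$ remain $\bZ$-linearly independent, hence $\bQ$-linearly independent, in $T_A(\bQ_p)$. By hypothesis they span a $\bQ_p$-subspace $V$ of dimension $r$, with $1\le r\le\min\{g,\ell\}$.

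\textbf{Pigeonhole.} For an integer parameter $T\ge 1$, the $(2T+1)^\ell$ integer tuples $(a_1,\ldots,a_\ell)\in\{-T,\ldots,T\}^\ell$ are sent by $(a_1,\ldots,a_\ell)\mapsto\sum a_iu_i$ into a bounded region of the $r$-dimensional space $V$. Covering this region by $p$-adic balls (of which there are $\asymp\epsilon^{-r}$ of radius $\epsilon$) and applying the $p$-adic pigeonhole principle produces a nonzero tuple with $\|\sum a_iu_i\|_p\le c_1T^{-\ell/r}$. The corresponding point $Q:=\sum a_iP_i\in\Gamma$ is non-torsion (by $\bZ$-independence of the $P_i$ modulo torsion), and satisfies $\log Q=\sum a_iu_i$.

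\textbf{Lower bound.} Invoke a quantitative $p$-adic analog of W\"ustholz's analytic subgroup theorem on abelian varieties to obtain a polynomial lower bound $\|\log Q\|_p\ge c_2\,\hat h(Q)^{-\kappa}$ for non-torsion $Q\in A(\bQ)$, where $\hat h$ denotes the N\'eron--Tate height and $\kappa$ is an exponent determined by the geometry of $A$. Combined with the quadratic growth $\hat h(Q)\le c_3T^2$ on $\Gamma$, this reads $\|\log Q\|_p\ge c_4T^{-2\kappa}$.

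\textbf{Balance and conclusion.} Compatibility of the two estimates as $T\to\infty$ forces $\ell/r\le 2\kappa$. A bare-hands $\kappa=g$ gives only the weaker bound $r\ge\ell/(2g)$; to reach the claimed inequality $r(\ell+2g)\ge\ell g$ one refines the construction via a Baker-style two-parameter optimization, introducing an auxiliary polynomial of degree $L$ on the product $A^\ell$, and invoking a Philippon-type zero estimate. Here the simplicity of $A$ is essential: the algebraic subgroups of $A^\ell$ all have dimension a multiple of $g$, which controls the exceptional set in the zero estimate and forces the denominator $\ell+2g$ rather than a softer quantity.

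\textbf{Main obstacle.} The principal difficulty is the precise shape of the exponent. A straightforward Liouville-type lower bound yields only $r\ge\ell/(2g)$; the improvement to $r\ge\ell g/(\ell+2g)$ requires a careful balance between the auxiliary degree $L$ and the translate range $T$, with tight bookkeeping of derivative orders and of the constraints on algebraic subgroups of $A^\ell$ imposed by the simplicity of $A$. This step is where the bulk of the technical work will reside.
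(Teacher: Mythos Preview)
Your outline has the right flavor (upper bound vs.\ lower bound, let a parameter go to infinity), but as written it is not a proof: the crucial ``Lower bound'' step is a black box that does not exist in the form you need. A quantitative $p$-adic estimate $\|\log Q\|_p\ge c_2\,\hat h(Q)^{-\kappa}$ for an \emph{individual} non-torsion point $Q$ on a simple abelian variety, with $\kappa$ sharp enough to yield $r\ge \ell g/(\ell+2g)$ after your pigeonhole, is essentially the theorem you are trying to prove (indeed it is a one-point instance of the algebraic subgroup theorem). You acknowledge this yourself: with the only $\kappa$ you can justify you get $r\ge \ell/(2g)$, and your ``refinement'' paragraph (auxiliary polynomial on $A^\ell$, Philippon-type zero estimate, two-parameter optimization) is not a refinement of the pigeonhole argument but a completely different proof that replaces it. So the proposal reduces the theorem to itself.

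The paper's argument avoids this circularity by working directly with an \emph{interpolation determinant}. One fixes parameters $L=N^{\ell g}$, $T=N^{\ell}$, $S=c_1N^{g}$ and forms the $L\times L$ determinant
\[
\Delta=\det\Bigl(\gamma_{\us,1}^{t_1}\cdots\gamma_{\us,g}^{t_g}\Bigr)_{\ut\in\bZ^g(T),\ \us\in\calS},
\]
where the $\gamma_{\us,\nu}$ are affine coordinates of the points $s_1\gamma_1+\cdots+s_\ell\gamma_\ell$. The Masser--W\"ustholz zero estimate on the simple abelian variety $A$ guarantees that $\calS$ can be chosen so that $\Delta\neq 0$. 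Since $\Delta\in\bQ^\times$, the N\'eron--Tate height bound plus Liouville's inequality give $\log|\Delta|_p\ge -cLTS^2$. On the analytic side, the entries of $\Delta$ are values of the $r$-variable functions $\theta_1^{t_1}\cdots\theta_g^{t_g}$ at $L$ points of $\bQ_p^r$, and the ultrametric Schwarz lemma for interpolation determinants yields $\log|\Delta|_p\le -c'L^{1+1/r}$. Comparing and using $TS^2\asymp L^{1/g+2/\ell}$ gives $1/r\le 1/g+2/\ell$ directly. There is no Siegel/pigeonhole step and no appeal to an external lower bound for a single logarithm; the parameter $r$ enters only through the number of analytic variables in the Schwarz lemma, which is exactly what produces the correct exponent.
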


\begin{corollary}
Under the same assumptions, 
$$
r\ge\frac{1}{3} \min\{g,\ell\}.
$$
Moreover, if $\ell>2g(g-1)$, then $r=g$.

\end{corollary}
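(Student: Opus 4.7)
The corollary is a purely formal consequence of Theorem~\ref{Theorem:MainTranscendenceResult}, so my plan is to simply manipulate the inequality $r \ge \ell g/(\ell + 2g)$ and separate into the two natural cases. There is no real obstacle to overcome; the only subtle point is to remember that $r$, being the dimension of a $\bQ_p$-vector space, is an integer, which is crucial for the second assertion.

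For the lower bound $r \ge \frac{1}{3}\min\{g,\ell\}$, I would distinguish two cases. If $\ell \le g$, then $\min\{g,\ell\} = \ell$, and the inequality to check reduces to $g/(\ell + 2g) \ge 1/3$, i.e.\ $\ell \le g$, which is our assumption. Symmetrically, if $\ell \ge g$, then $\min\{g,\ell\} = g$ and the inequality becomes $\ell/(\ell + 2g) \ge 1/3$, i.e.\ $\ell \ge g$, again our assumption. In both cases the reduction is an elementary one-line calculation starting from Theorem~\ref{Theorem:MainTranscendenceResult}.

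For the second assertion, I would argue by contraposition. Suppose $r < g$; since $r$ is an integer, this means $r \le g-1$. Combining with $r \ge \ell g/(\ell+2g)$ gives
\[
(g-1)(\ell + 2g) \ge \ell g,
\]
which after expansion simplifies to $\ell \le 2g(g-1)$. Thus $\ell > 2g(g-1)$ forces $r \ge g$, and since the inequality $r \le g$ is already built into the setting (noted just before Conjecture~\ref{Conjecture}), we conclude $r = g$. This completes the proof outline; both statements follow by direct arithmetic from the theorem without any further input.
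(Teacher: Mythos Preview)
Your proof is correct; the paper itself does not spell out a proof of this corollary, treating both assertions as immediate arithmetic consequences of Theorem~\ref{Theorem:MainTranscendenceResult}, and your case-split together with the contrapositive using the integrality of $r$ is exactly the intended (and essentially only) derivation.
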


Theorem \ref{Theorem:MainTranscendenceResult} is a special case of Theorem 2.1 of \cite{MR84k:10032}, where the simple Abelian variety $A$ over $\bQ$ is replaced by a commutative algebraic group $G$ over a number field. 
Our special case enables us to produce a much simpler proof. In particular, the zero estimate is much easier here, since there is no algebraic subgroup of $G$ to be taken care of. Also, the main difference between our proof and the two proofs in  \cite{MR84k:10032} is that we use an interpolation determinant in place of an auxiliary function (Proposition 2.7 of  \cite{MR84k:10032}) or in place of an auxiliary functional (Proposition 2.10 of  \cite{MR84k:10032}): we do not need the $p$--adic Siegel Lemma (Lemma 3.3 of  \cite{MR82k:10042}). The two proofs in  \cite{MR84k:10032} are dual to each other, and this duality is just a transposition of the interpolation determinant of the present paper.

\section{Further notations and auxiliary results}\label{Section:FurtherNotations}

We keep the notations of \S~\ref{Section:Introduction}. We select $\ell$ elements $\gamma_1,\ldots,\gamma_\ell$ in $\Gamma$ linearly independent over $\bZ$. 

For $T$ a positive integer, we denote by $\bZ^g(T)$ the set of tuples $\ut=(t_1,\ldots,t_g)$ in $\bZ^g$ with $0\le t_i<T$ ($1\le i\le g)$. Similary, for $S\in\bZ_{>0}$, 
$\bZ^\ell(S)$ denotes the set of tuples $\us=(s_1,\ldots,s_\ell)$ in $\bZ^\ell$ with $0\le s_j<S$ ($1\le j\le \ell)$. Further, $\Gamma(S)$ will denote the set of $s_1\gamma_1+\cdots+s_\ell\gamma_\ell$ with $\us\in\bZ^\ell(S)$. Hence $\Gamma(S)$ is a subset of $A(K)$ with $S^\ell$ elements.

\subsection{The $p$--adic logarithm}\label{Subsection:p-adicLogarithm}

We follow the paper by B.~Poonen \cite{Poonen} which refers to N.~Bourbaki \cite{MR1728312} Chap.~III, \S~1 and \S~7.6.

Since $\Gamma$ is a finitely abelian subgroup of $A(\bQ_p)$ of rank $\ell$, $\log\Gamma$ is also a finitely generated abelian subgroup of $T_A(\bQ_p)$ of the same rank $\ell$ over $\bZ$. The closure $\overline{\log\Gamma}=\log\Gammabar$ with respect to the $p$-adic topology is nothing else than  the $\bZ_p$--submodule of $T_A(\bQ_p)$ spanned by $\log\Gamma$, hence is a finitely generated $\bZ_p$--module. The dimension of $\Gammabar$ as a Lie group over $\bQ_p$ is 
$$
\dim \Gammabar:= \rk_{\bZ_p}\overline{\log\Gamma}.
$$

\subsection{Heights}

\subsubsection{A projective embedding}\label{Subsubsection:ProjectiveEmbedding}

We fix an embedding $\iota$ of the Abelian variety $A$ into a projective space $\bP_N$ over $\bQ$, with an image which is not contained into the hyperplane $X_0=0$, and so that the functions $X_1/X_0,\ldots,X_g/X_0$ are algebraically independent over $A$ (recall that $A$ has dimension $g$). We also assume that for $\us\in\bZ^\ell$,  $\iota(\gamma_{\us})$ does not lie in the hyperplane $X_0=0$ and we  denote by $(1:\gamma_{\us 1}:\cdots:\gamma_{\us N})$ the coordinates of $\iota(\gamma_{\us})$ in $\bP_N$, so that $\gamma_{ \us \nu}\in \bQ$ for $1\le \nu\le N$ and $\us\in\bZ^\ell$. For convenience, we also assume that the zero element of $A$ has projective coordinates $(1:0:\cdots:0)$.

\subsubsection{Absolute logarithmic height}\label{Subsubsection:Height}


 Denote by $P=\{2,3,5,\dots\}$ the set of positive prime numbers and by $M_\bQ $  the set of normalized places of $\bQ$ indexed by $P\cup\{\infty\}$:
 for $c\in\bQ^\times$ we write 
 $$
 c=\pm \prod_{p\in P} p^{v_p(c)}
 $$
 and we have 
 $$
 \begin{cases}
|c|_v=|c|=\max\{c,-c\}& \hbox{ for $v=v_\infty$}\\
 |c|_p = p^{-v_p(c)}& \hbox{ for $ p\in P$}.
 \end{cases}
 $$
  The {\it product formula}, in this very simple case,  
 states that, for $c\in \bQ\setminus\{0\}$, 
$$
 \prod_{v\in M_\bQ}    |c|_v=1.
 $$
 The absolute logarithmic height of $c\in \bQ$ is defined as
 $$
\rmh(c)= \sum_{v\in M_\bQ}    \log\max\{1,\; |c|_v\}.
$$  
For $c\in \bQ^\times$, we write $c=a/b$ where $a\in\bZ\setminus\{0\}$ and $b\in\bZ_{>0}$ are two relatively prime integers. Since 
$\min\{v_p(a),\; v_p(b)\}=0$ for all $p\in P$, we have, for all $p\in P$,  
$$
\max\{|a|_p,|b|_p\}=1, 
\quad\hbox{which means}\quad 
\max\{1,|c|_p\}= |b|_p^{-1}.
$$
Hence, by the product formula,
$$
 \prod_{p\in P}   \max\{1,|c|_p\}= b.
 $$
 Multiplying both sides by $\max\{1,|c|\}$ yields 
 $$
\rmh(c)=\log\max\{|a|,b\},
$$
which can be taken as an alternative definition for the absolute logarithmic height. 

 Liouville's inequality is very simple in this context: 

\begin{lemma}\label{Lemma:InegaliteLiouville}
If $c$ is a non--zero rational number and $p$ a prime number, then 
$$
  \log|c|_p\ge - \rmh (c).
$$
\end{lemma}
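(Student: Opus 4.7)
The plan is to derive the inequality directly from the product formula, which the author has just established, rather than from the explicit formula $\mathrm{h}(c)=\log\max\{|a|,b\}$. The key observation is simply that $|c|_p$ is a single factor in the product $\prod_{v\in M_{\bQ}}|c|_v=1$, so isolating it converts the product formula into an exact expression for $-\log|c|_p$.

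Concretely, I would first take logarithms of the product formula to obtain the identity
$$
-\log|c|_p \;=\; \sum_{v\in M_{\bQ},\, v\neq v_p}\log|c|_v.
$$
Next, for each place $v\neq v_p$, I would apply the trivial bound $\log|c|_v\le \log\max\{1,|c|_v\}$. Summing these bounds over all $v\neq v_p$ and then adding the (non-negative) term $\log\max\{1,|c|_p\}$ on the right can only enlarge the right-hand side, so
$$
-\log|c|_p \;\le\; \sum_{v\in M_{\bQ}}\log\max\{1,|c|_v\} \;=\; \rmh(c).
$$
Rearranging yields $\log|c|_p\ge -\rmh(c)$, as desired.

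As a sanity check and an alternative route, one could bypass the product formula and use the explicit formula for the height: writing $c=a/b$ with $a\in\bZ\setminus\{0\}$, $b\in\bZ_{>0}$ coprime, coprimality gives $\min\{v_p(a),v_p(b)\}=0$, hence $|c|_p=p^{v_p(b)-v_p(a)}\ge p^{-v_p(a)}\ge 1/|a|\ge 1/\max\{|a|,b\}$, and taking logarithms recovers the claim.

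There is no genuine obstacle; the lemma is essentially a one-line consequence of the product formula. The only care required is keeping track of the direction of the inequality when passing from $|c|_v$ to $\max\{1,|c|_v\}$, which goes the correct way precisely because we end up bounding $-\log|c|_p$ from above.
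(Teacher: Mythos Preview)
Your proof is correct. The paper does not actually supply a proof of this lemma; it simply announces that ``Liouville's inequality is very simple in this context'' and states the lemma. Your product-formula argument (and the alternative via the explicit formula $\rmh(c)=\log\max\{|a|,b\}$) are exactly the kind of one-line justifications the author is leaving to the reader.
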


For  $N\ge 1$ and $\uc=(c_0:\dots:c_N)\in\bP_N(\bQ)$, we set
$$
\rmh(\uc)=  \sum_{v\in M_\bQ}   \log\max\{ |c_0|_v,\;\dots,\;  |c_N|_v\}.
$$
If $c_0,\ldots,c_N$ are rational integers, not all of which are zero, which are relatively prime, then 
$$
\rmh(\uc)=  \log\max\{ |c_0|,\;\dots,\;  |c_N|\}.
$$
Notice that for $c\in\bQ$, $\rmh(c)=\rmh(1:c)$. 
 
\subsubsection{Néron--Tate height  }\label{Subsubsection:NeronTate}

The projective embedding considered in \S~\ref{Subsubsection:ProjectiveEmbedding} is associated with a very ample line bundle 
on $A$, to which is associated a canonical height which is a quadratic function  (see  \cite{MR1757192} Chap.~3 and  \cite{MR1745599} \S~B.5). 

\begin{lemma}\label{Lemma:NeronTateHeight}
For $\us\in\bZ^{\ell}(S)$,
$$
\rmh(s_1\gamma_1+\cdots+s_{\ell} \gamma_\ell)=\rmh(1:\gamma_{\us 1}:\cdots:\gamma_{\us N})\le c S^2.
$$
\end{lemma}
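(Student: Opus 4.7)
The plan is to reduce the inequality to the quadratic nature of the N\'eron--Tate canonical height associated with the very ample line bundle defined by the projective embedding $\iota$ of \S\ref{Subsubsection:ProjectiveEmbedding}. The first equality in the displayed formula is pure notation: by construction $\gamma_{\us \nu}$ were defined in \S\ref{Subsubsection:ProjectiveEmbedding} as the projective coordinates of $\iota(\gamma_{\us})$, where $\gamma_{\us}=s_1\gamma_1+\cdots+s_\ell\gamma_\ell$. So the content of the lemma is the upper bound $\rmh(\gamma_{\us})\le cS^2$.

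The first step is to introduce the canonical height $\hat\rmh: A(\Qbar)\to \bR_{\ge 0}$ attached to the very ample line bundle $L$ pulled back from $\mathcal O_{\bP_N}(1)$ via $\iota$. Standard facts (see e.g.\ the references cited in \S\ref{Subsubsection:NeronTate}) give us two ingredients. First, there is a constant $C_1$ with $|\hat\rmh(P)-\rmh(\iota(P))|\le C_1$ for every $P\in A(\Qbar)$. Second, decomposing $L$ as a tensor product of a symmetric and an antisymmetric line bundle, one may write $\hat\rmh=q+\lambda$ on $A(\Qbar)$, where $q$ is a non--negative quadratic form and $\lambda$ a linear form (both on $A(\Qbar)$ modulo torsion).

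The second step is to estimate $\hat\rmh(\gamma_{\us})$. By bilinearity of the polarization associated with $q$,
$$
q(\gamma_{\us})=\sum_{1\le i,j\le \ell} s_i s_j\, b(\gamma_i,\gamma_j),
\qquad
\lambda(\gamma_{\us})=\sum_{1\le i\le \ell} s_i\, \lambda(\gamma_i),
$$
where $b(\cdot,\cdot)$ is the symmetric bilinear form polarizing $q$. Since $\gamma_1,\ldots,\gamma_\ell$ are fixed, the numbers $b(\gamma_i,\gamma_j)$ and $\lambda(\gamma_i)$ are absolute constants, and $|s_i|<S$, so that $|q(\gamma_{\us})|\le C_2 S^2$ and $|\lambda(\gamma_{\us})|\le C_3 S$. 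Hence $\hat\rmh(\gamma_{\us})\le C_4 S^2$.

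Combining the two bounds yields $\rmh(\gamma_{\us})\le \hat\rmh(\gamma_{\us})+C_1\le cS^2$ for a suitable $c$ depending only on $A$, $\iota$ and $\gamma_1,\ldots,\gamma_\ell$, as required. The main point (and only real obstacle) is the appeal to the quadraticity of the canonical height; everything else is routine bookkeeping. One could alternatively give a self--contained argument without introducing $\hat\rmh$ by using the addition formulae on $A$ to bound $\rmh(P+Q)$ in terms of $\rmh(P)$ and $\rmh(Q)$, iterating, and noting that doubling multiplies the height by $4$ up to a bounded error, but this amounts to reproving the quadraticity of $\hat\rmh$.
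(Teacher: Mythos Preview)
Your argument is correct and is essentially the approach the paper has in mind: the paper does not give a proof of this lemma but merely records, just before stating it, that the projective embedding is attached to a very ample line bundle whose associated canonical height is a quadratic function (with references to Serre and Hindry--Silverman). Your write-up is a faithful and more careful expansion of that hint, including the decomposition $\hat\rmh=q+\lambda$ into a quadratic and a linear part, which the paper elides.
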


\subsubsection{Upper bound for the height  }\label{Subsubsection:UpperBoundHeight}

We shall use the following result, which is a very simple case of  Lemma {3.8.} in \cite{MR1756786} (where $\bQ$ is replaced by a number field).
We denote by  $\rmL(f)$  the length of a polynomial  $f$ (sum of the absolute values of the coefficients).

\begin{lemma} \label{Lemma:Height}
Let   $\nu_1,\ldots,\nu_L$ be positive integers. For  $1\le i\le L$, let $\gamma_{i1},\ldots,\gamma_{i\nu_i}$ be rational numbers. Denote by $\ugamma$ the point $\bigl(\gamma_{ij}\bigr)_{1\le j\le \nu_i,1\le i\le L}$ in $\bQ^{\nu_1+\cdots+\nu_L}$. Further, let $f$ be a nonzero polynomial in $\nu_1+\cdots+\nu_L$ variables, with coefficients in $\bZ$, of total degree at most $N_i$ with respect to the $\nu_i$ variables corresponding to $\gamma_{i1},\ldots,\gamma_{i\nu_i}$.  Then 
$$
\rmh\bigl(f(\ugamma)\bigr)\le\log \rmL(f)+
\sum_{i=1}^L N_i\rmh(1\colon \gamma_{i1}\colon \cdots\colon \gamma_{i\nu_i}). 
$$
\end{lemma}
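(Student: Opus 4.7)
I will proceed locally and then sum, which is the natural way to bound an absolute logarithmic height. Write $f=\sum_m c_m X^m$ as a sum of monomials; by the degree hypothesis, every multi-exponent $m=(e_{ij})$ satisfies $\sum_j e_{ij}\le N_i$ in the $i$-th block of variables. For each $v\in M_\bQ$ introduce the abbreviation
$$
M_{i,v}:=\max\{1,\; |\gamma_{i1}|_v,\;\dots,\; |\gamma_{i\nu_i}|_v\},
$$
so that $\sum_{v\in M_\bQ}\log M_{i,v}=\rmh(1:\gamma_{i1}:\cdots:\gamma_{i\nu_i})$ by the very definition of the logarithmic height.

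A single monomial evaluated at $\ugamma$ contributes at worst $|c_m|_v\prod_i M_{i,v}^{N_i}$ in $v$-absolute value. At the archimedean place, summing via the ordinary triangle inequality gives $|f(\ugamma)|_\infty\le\rmL(f)\prod_i M_{i,\infty}^{N_i}$; at a finite place $p$, the ultrametric inequality together with $c_m\in\bZ$ (so $|c_m|_p\le 1$) gives $|f(\ugamma)|_p\le\prod_i M_{i,p}^{N_i}$.

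Now take $\log\max\{1,\cdot\}$ of each local estimate and sum over all places $v\in M_\bQ$. Since $M_{i,v}\ge 1$, the factor $\prod_i M_{i,v}^{N_i}$ always exceeds $1$ and passes through the outer $\max$ unchanged; likewise $\log\rmL(f)\ge 0$. Collecting terms yields exactly
$$
\rmh(f(\ugamma))\le\log\rmL(f)+\sum_{i=1}^L N_i\rmh(1:\gamma_{i1}:\cdots:\gamma_{i\nu_i}),
$$
as required. The argument is essentially formal; the one delicate point is recognizing that the integrality of the coefficients is precisely what prevents $\rmL(f)$ from contributing at finite places, so that a single archimedean $\log\rmL(f)$ term suffices in the final bound. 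I do not foresee any genuine obstacle here.
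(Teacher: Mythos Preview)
Your argument is correct and follows essentially the same route as the paper: both proofs bound each monomial by $\prod_i M_{i,v}^{N_i}$, use the triangle inequality at the archimedean place to pick up the factor $\rmL(f)$, and use the ultrametric inequality together with $|c_m|_p\le 1$ at finite places. You spell out a bit more explicitly the final step of applying $\log\max\{1,\cdot\}$ and summing over places (noting $M_{i,v}\ge 1$ and $\rmL(f)\ge 1$), which the paper leaves implicit, but the substance is identical.
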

 
 \begin{proof}
 Let us write 
 $$
 f(\uX)=\sum_{\ulambda} c_{\ulambda}  \prod_{i=1}^L \prod_{j=1}^{\nu_i} X_{ij}^{\lambda_{ij}},
 $$
 where $\uX$ (resp. $\ulambda$) stands for  the $\nu_1+\cdots+\nu_L$ --tuple  $(X_{ij})_{1\le j\le \nu_i,\; 1\le i\le L}$ (resp.  $(\lambda_{ij})_{1\le j\le \nu_i,\; 1\le i\le L}$. Lemma \ref{Lemma:Height} follows from the estimates
 \begin{align}\notag 
 |f(\ugamma)|
 &\le\sum_{\ulambda}   |c_{\ulambda}| \prod_{i=1}^L \prod_{j=1}^{\nu_i}\max\{1,\;  |\gamma_{ij}|\}^{\lambda_{ij}}\\
 \notag
 &\le \rmL(f)  \prod_{i=1}^L  \max\{1,\;    |\gamma_{i1}| ,\;  \dots , \;  |\gamma_{i\nu_i}|\}^{N_i}
\end{align}
 and
\begin{align}\notag 
 |f(\ugamma)|_p
 &\le\max_{\ulambda}   \prod_{i=1}^L \prod_{j=1}^{\nu_i}\max\{1,\;  |\gamma_{ij}|_p\}^{\lambda_{ij}}\\
 \notag
 &\le  \prod_{i=1}^L  \max\{1,\;   |\gamma_{i1}|_p,\;  \dots ,\;  |\gamma_{i\nu_i}|_p\}^{N_i}
\end{align}
 for $p\in P$. 
 \end{proof}

\subsection{$p$--adic analytic functions}\label{Subsection:p-adicFunctions}

\subsubsection{Ultrametric power series}\label{Subsubsection:UltrametricPowerSeries}

We follow 
\cite{MR1942136}. The field $\bQ_p$ 
 is complete for the $p$--adic absolute value. Let 
$$
f=\sum_{n_1\ge 0}\cdots\sum_{n_r\ge 0}  a_{n_1,\ldots,n_r} z_1^{n_1}\cdots z_r ^{n_r}=\sum_{\un\in\bZ_{\ge 0}^r}  a_{\un} \uz^{\un}
$$
be a formal series with coefficients in $\bQ_p$. If $R$ is a real number $>0$, we set
$$
 |f|_R=\sup_{\un\in\bZ_{\ge 0}^r}   R^{|\un|} |a_{\un}|, \quad\hbox{where } \quad |\un|= n_1+\cdots+n_r.
 $$
 We have
 $$
 |f+g|_R\le\sup\{|f|_R,\; |g|_R\},
 \quad |\lambda f |_R=|\lambda| \cdot |f|_R
\quad\hbox{ 
 and }\quad
 |fg|_R=|f|_R |g|_R  
 $$
 if $|f|_R$ and $|g|_R$ are finite.  
 When $|f|_R$ is finite, the series $f(\uz)$ converges in the polydisc $|z_i|<R$. Moreover, it converges in the closed polydisc  $|z_i|\le R$ when $R^{|\un|} |a_{\un}|$ tends to zero. We have
 $$
 |f(\uz)|\le |f|_R.
 $$
 Since the residue field of $\bQ_p$ is infinite and the group of values of $\bQ_p^\times$ is dense, we also have
 $$
 |f|_R=\sup |f(\uz)| \quad  \hbox{ for } \quad |z_i|<R.
 $$
If $R'\le R$, we have $|f|_{R'}\le |f|_R$ ({\it maximum modulus principle}).

\subsubsection{Ultrametric Schwarz Lemma}\label{Subsubsection:UltrametricSchwarzLemma}

 The purpose of the Schwarz's Lemma is to improve the maximum modulus principle by taking into account the zeros of $f$ inside the polydisc  $ |z_i|<R'$. With the method of interpolation determinants of Laurent \cite{MR1756786}, we need only to take into account the multiplicity of the zero at the origin. For this reason, the proof reduces to the one variable case (as a matter of fact, we shall use Lemma \ref{Lemma:SchwarzLemma} only for the case of functions of a single variable). 

\begin{lemma}\label{Lemma:SchwarzLemma}
If $f$ has a zero of multiplicity $\ge h$ at the origin, then for $R'\le R$ we have
$$
|f|_{R'}\le \left(\frac{R'}{R}\right)^h |f|_R.
$$
\end{lemma}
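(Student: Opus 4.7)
My plan is to follow the reduction to one variable indicated in the paper, since that is the only case needed in the sequel. Given a single-variable power series $f(z) = \sum_{n \ge 0} a_n z^n$ with $a_0 = \cdots = a_{h-1} = 0$, the first step is the factorization $f(z) = z^h g(z)$ with $g(z) = \sum_{m \ge 0} a_{m+h} z^m$; this is the only genuinely nontrivial manipulation.

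The second step is a direct unfolding of the Gauss norm. From the definition $|f|_R = \sup_n R^n |a_n|_p$ and the vanishing of the first $h$ coefficients,
\[
|f|_R = \sup_{n \ge h} R^n |a_n|_p = R^h \sup_{m \ge 0} R^m |a_{m+h}|_p = R^h |g|_R,
\]
and the analogous identity holds with $R'$ in place of $R$. The third step is to apply the maximum modulus principle recorded at the end of \S\ref{Subsubsection:UltrametricPowerSeries}, which gives $|g|_{R'} \le |g|_R$ when $R' \le R$. Combining the two identities with this inequality yields
\[
|f|_{R'} = (R')^h |g|_{R'} \le (R')^h |g|_R = \left(\frac{R'}{R}\right)^h |f|_R,
\]
as desired.

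I do not expect any real obstacle: the ultrametric setting sidesteps the contour integration or extremal function arguments needed in the archimedean Schwarz lemma, and everything reduces to matching powers of $R$. The one small care point is to treat $|f|_R = \infty$ separately, in which case the inequality is vacuous, so that the manipulation of suprema in the second step is unambiguous. For the multivariable statement, if one wished to record it, one could compose $f$ with the diagonal map $t \mapsto (t z_1, \ldots, t z_r)$ to reduce to a one-variable series still vanishing to order $\ge h$ at the origin, but this extension is not used below.
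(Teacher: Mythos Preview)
Your proof is correct and follows essentially the same idea as the paper's: factor out the order-$h$ zero at the origin and apply the maximum modulus principle to the quotient. The paper organizes this by first choosing a point $\uz$ with $|f(\uz)|=|f|_R$ and passing to the one-variable series $g(t)=t^{-h}f(t\uz)$, whereas you work directly with the coefficient definition of the Gauss norm in one variable; your version is in fact slightly cleaner, since it sidesteps the question of whether the supremum $|f|_R$ is attained at an actual point.
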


\begin{proof} [Proof (following \cite{MR1942136}).]
Let $\uz$ satisfy $|f(\uz)|= |f|_R$ and $ |z_i|\le R$. Define $g(t)=t^{-h}f(t\uz)$ for $t\in \bQ_p$ with $|t|\le 1$.  Since $R'/R\le 1$, we deduce  $|g|_{ R'/R}\le |g|_1$.  Since 
$ |g|_1=|f|_R$ and $|g|_{ R'/R}=(R/R')^h|f|_R$, 
Lemma \ref{Lemma:SchwarzLemma} follows.
\end{proof}

A quantitative version of Lemma \ref{Lemma:SchwarzLemma} is Lemma 3.4.p of  \cite{MR82k:10042}.

\begin{corollary}\label{Corollary:UpperBoundInterpolationDeterminant}
Let $f_1,\ldots,f_L$ be power series in $\bQ_p^r $ with $|f_\lambda|_R<\infty$ and let $\uz_1,\ldots,\uz_L$ be points in the polydisc  $ |z_i|\le R'$ with $R'\le R$. Then the determinant
$$
\Delta=\det
\Bigl(f_\lambda(\uz_\mu)\Bigr)_{1\le\lambda,\mu\le L}
$$
is bounded by 
$$
|\Delta|\le 
L! 
\left(\frac{R'}{R}\right)^{L^{1+1/r} } \prod_{\lambda=1}^L  |f_\lambda|_R.
$$
\end{corollary}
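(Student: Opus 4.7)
My approach is the standard interpolation-determinant trick, which reduces the several-variable estimate to a one-variable application of the Schwarz lemma already proved. I introduce the auxiliary function of a single $p$-adic variable
$$
F(t) = \det\bigl(f_\lambda(t\uz_\mu)\bigr)_{1\le\lambda,\mu\le L},
$$
so that the quantity of interest is $\Delta=F(1)$. Each entry is a $p$-adic power series in $t$, hence so is $F$. The plan is to (i) bound $|F|_{R/R'}$ globally, (ii) show that $F$ vanishes to high order at $t=0$, and (iii) invoke Lemma \ref{Lemma:SchwarzLemma} to transfer the bound from radius $R/R'$ down to radius $1$, where $\Delta=F(1)$ is to be estimated.

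For (i), when $|t|\le R/R'$ and $|z_{\mu,i}|\le R'$ I have $|tz_{\mu,i}|\le R$, so each entry satisfies $|f_\lambda(t\uz_\mu)|\le|f_\lambda|_R$. Expanding the determinant by the Leibniz formula as a sum of $L!$ signed products and applying the ultrametric triangle inequality gives the matching bound $|F|_{R/R'}\le L!\prod_\lambda |f_\lambda|_R$ (the ultrametric inequality in fact yields the cleaner bound without $L!$, but the stated $L!$ factor is harmless since $|L!|_p\le 1$).

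For (ii), I expand each $f_\lambda(\uz)=\sum_{\un} a_{\un}^{(\lambda)}\uz^{\un}$ and use multilinearity of the determinant to write
$$
F(t)=\sum_{\un_1,\ldots,\un_L} \Bigl(\prod_{\lambda=1}^L a_{\un_\lambda}^{(\lambda)}\Bigr)\, t^{|\un_1|+\cdots+|\un_L|}\;\det\bigl(\uz_\mu^{\un_\lambda}\bigr)_{\lambda,\mu}.
$$
Whenever two of the multi-indices $\un_\lambda$ coincide the determinant factor vanishes, so only $L$-tuples of pairwise distinct multi-indices $\un_1,\ldots,\un_L\in\bZ_{\ge 0}^r$ contribute. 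Consequently $F$ has a zero at $t=0$ of order at least
$$
h := \min\Bigl\{\textstyle\sum_{\lambda=1}^L |\un_\lambda| \;:\; \un_1,\ldots,\un_L\in\bZ_{\ge 0}^r \text{ pairwise distinct}\Bigr\}.
$$
A Gauss-style count of lattice points in the simplex $|\un|\le d$, namely $\#\{\un:|\un|\le d\}=\binom{d+r}{r}$, gives the $L$ smallest admissible $|\un_\lambda|$ a maximal value of order $(r!L)^{1/r}$, whence the sum is of order $L^{1+1/r}$, matching the exponent in the corollary.

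Step (iii) is then immediate: Lemma \ref{Lemma:SchwarzLemma} applied to $F$ with radii $1\le R/R'$ gives $|F|_1\le(R'/R)^h|F|_{R/R'}$, and evaluating at $t=1$ produces the stated bound. The only non-bookkeeping step is the combinatorial estimate on $h$ in (ii); this is the main (albeit mild) obstacle, and once the cancellation coming from the antisymmetry of the determinant is recognized, the remainder of the argument is a direct application of tools already in hand.
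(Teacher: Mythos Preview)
Your argument is correct and follows exactly the same route as the paper: introduce the one-variable function $\Psi(t)=\det\bigl(f_\lambda(t\uz_\mu)\bigr)$, bound $|\Psi|_{R/R'}$ via the entrywise estimate, use antisymmetry of the determinant to see that only tuples of pairwise distinct multi-indices survive in the Taylor expansion (whence the order of vanishing is at least $\min\sum_\lambda|\un_\lambda|$ over such tuples), and then apply Lemma~\ref{Lemma:SchwarzLemma}. The paper delegates your step~(ii) to Lemmas~6.4 and~6.5 of \cite{MR1756786}, which give the same combinatorial lower bound (stated there as $>(r/e)L^{1+1/r}$); your sketch of the lattice-point count is the content of those lemmas. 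Your observation that the ultrametric inequality in fact kills the $L!$ is correct and sharper than what is written.
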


\begin{proof}
Corollary \ref{Corollary:UpperBoundInterpolationDeterminant} is an ultrametric version of Lemma 6.3 of  \cite{MR1756786}; it follows from Lemma \ref{Lemma:SchwarzLemma} by means of Lemmas   6.4 and 6.5 of \cite{MR1756786}, according to which the function of one variable 
$$
\Psi(t)=\det
\Bigl(f_\lambda(t\uz_\mu)\Bigr)_{1\le\lambda,\mu\le L}
$$
has a zero of multiplicity  greater than  $(n/e) L^{1+1/n}$ at the origin.

\end{proof}

\subsubsection{$p$--adic theta functions}\label{Subsubsection:p-adicTheta}

Since the kernel of the logarithmic map 
$$
\log: A(\bQ_p)\longrightarrow T_A(\bQ_p)
$$ 
is the set of torsion points of $ A(\bQ_p)$, this map is locally injective near the neutral element of $A(\bQ_p)$. Let $\calU$ be an open neighborhood of $(1:0:\cdots:0)$ in $A(\bQ_p)$, $\calV$ be an open neighborhood of $0$ in $T_A(\bQ_p)$  and $\theta:\calV\rightarrow \calU$ be a local inverse of $\log$:
$$
u\in\calU\; \Longrightarrow \; \log u\in\calV \quad  \hbox{and} \quad 
 \theta \log(u)=u,
$$
$$
v\in\calV\; \Longrightarrow \; \theta(v)\in\calU \quad  \hbox{and} \quad 
 \log  \theta(v)=v.
 $$
By definition of $r$, $\overline{\log \Gamma}$ is a $\bZ_p$-submodule of $ T_A(\bQ_p)$ of dimension $r$  which contains the $\ell$ elements $\log \gamma_j $ ($1\le j\le \ell$). Let $e_1,\ldots,e_r$ be a basis. 
Let $R>0$ be a positive real number such that  $z_1e_1+\cdots+z_re_r\in \calV$  for any $\uz=(z_1,\ldots,z_r)\in \bQ_p^r$ with $|z_i|_p \le R$.  
For $\uz=(z_1,\ldots,z_r)\in \bQ_p^r$ with $|z_i|_p <R$, define $\theta_1(\uz),\ldots,\theta_N(\uz)$ by 
$$
\theta(z_1e_1+\cdots+z_re_r)=\bigl(1:\theta_1(\uz):\cdots:\theta_N(\uz)\bigr).
$$
Then $\theta_1,\ldots,\theta_N$ are power series in $r$ variables with coefficients in $\bQ_p$ and  radius of convergence $\ge R$.

Write 
$$
\log \gamma_j =\sum_{i=1}^r \eta_{ji} e_i \quad\hbox{and}\quad 
y_j=(\eta_{j1},\ldots,\eta_{jr})\in \bQ_p^r \quad (1\le j\le \ell).
$$
Further,  select $M\in\bZ_{>0}$ such that 
$$
\max_{\atop{1\le j\le \ell}{1\le i\le r}} | M \eta_{ji} |_p<R.
$$
Then, for any $\us\in\bZ^{\ell}$ with $M|s_j$ for $1\le j\le \ell$, 
$$
s_1\log\gamma_1+\cdots+s_\ell\gamma_\ell\in\calV
$$
and
$$
\theta(s_1\log\gamma_1+\cdots+s_\ell \log \gamma_\ell)=\bigl(1:\gamma_{\us 1}:\cdots:\gamma_{\us N}\bigr).
$$  
 Hence 
 $\gamma_{\us \nu}=\theta_\nu(y_{\us})$ for all  $\us\in\bZ^\ell$  with $M|s_j$ and  for all $\nu$ with $1\le \nu\le N$. 

\section{The zero estimate and the interpolation determinant}\label{Section:ZeroEstimate}

The zero--estimate of Masser--Wüstholz (Main Theorem of  \cite{MR632987})  is valid for a quasi--projective commutative algebraic group variety  over a field $K$ of zero characteristic. We need it only for a simple Abelian variety, which makes the statement  shorter, since there is no algebraic subgroup to worry about.

Let again $A$ be a simple Abelian variety of dimension $g$ embedded into a projective space $\bP_N$. When $P\in K[Y_0,\ldots,Y_N]$ is a non--zero  homogenous polynomial, we denote by $Z(P)$  the hypersurface $P=0$ of $\bP_N(K)$.

\begin{lemma}[Zero estimate]\label{Lemma:ZeroEstimate}
There exists a constant $c>0$ depending only on $A$ and on the embedding of $A$ into $\bP_N$ with the following property. 
Let $\gamma_1, \ldots,\gamma_\ell$ be $\bZ$--linearly independent elements in  $ A(K)$.  Let 
 $P\in K[Y_0,\ldots,Y_N]$ be a homogenous polynomial of total degree $\le D$, such  that $Z(P)$ does not contain $A(K)$ but contains 
 $$
 \Gamma(S)=\bigl\{s_1\gamma_1+\ldots+s_\ell\gamma_s\; ; \; \us\in\bZ^{\ell}(S)\bigr\}.
 $$ 
 Then 
$$
D>c
(S/g)^{\ell/g} .
$$

\end{lemma}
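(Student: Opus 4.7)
\medskip
\noindent\textbf{Proof proposal.} The plan is to reduce the statement to the Masser--W\"ustholz zero estimate (Main Theorem of \cite{MR632987}) and then to observe that, for a simple abelian variety, its conclusion takes a particularly elementary form.

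First, I would recall the Masser--W\"ustholz statement as it applies to a commutative algebraic group of dimension $g$ embedded in $\bP_N$: there is a constant $c_0>0$ depending only on $A$ and the embedding such that, whenever a nonzero homogeneous polynomial $P$ of degree $\le D$ vanishes on the sum set $\Gamma(S)$, one can find a connected algebraic subgroup $B$ of $A$, properly contained in $A$ if $Z(P) \not\supset A$, satisfying an inequality of the shape
\[
\Card\bigl((B+\Gamma(S))/B\bigr) \,\le\, c_0\,(D/\deg B)^{g-\dim B}.
\]

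Specializing to $A$ simple, the only proper connected algebraic subgroup is $B = \{0\}$, with $\dim B = 0$ and $\deg B = 1$. Under the hypothesis $Z(P) \not\supset A(K)$, the displayed inequality therefore collapses to $\Card\,\Gamma(S) \le c_0\, D^g$. Because $\gamma_1, \ldots, \gamma_\ell$ are $\bZ$-linearly independent, the $S^\ell$ sums $s_1\gamma_1 + \cdots + s_\ell \gamma_\ell$ with $\us \in \bZ^\ell(S)$ are pairwise distinct, so $\Card\,\Gamma(S) = S^\ell$. Taking $g$-th roots yields $D \ge c_0^{-1/g} S^{\ell/g}$, which is slightly stronger than the desired bound $D > c(S/g)^{\ell/g}$, upon absorbing the harmless factor $g^{\ell/g} \ge 1$ into the constant and shrinking $c$ infinitesimally to make the inequality strict.

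The main --- essentially the only --- obstacle is invoking the Masser--W\"ustholz theorem in its correct form. In full generality for commutative algebraic groups, this zero estimate is delicate, since one must keep track of the lattice of connected algebraic subgroups (their dimensions, degrees, and the way the $\gamma_j$ sit with respect to them); but the simplicity hypothesis on $A$ degenerates this lattice to $\{0, A\}$, so no intermediate subgroup can ever arise. This is precisely the simplification highlighted in the introduction as the reason why ``the zero estimate is much easier here''.
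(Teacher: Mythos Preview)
Your proposal is correct and matches the paper's approach exactly: the paper does not give an independent proof of this lemma but simply records it as the specialization of the Masser--W\"ustholz Main Theorem \cite{MR632987} to a simple abelian variety, remarking (just before the statement) that simplicity means ``there is no algebraic subgroup to worry about''. Your write-up in fact makes the counting step ($\Card\,\Gamma(S)=S^\ell$ from $\bZ$--independence, hence $S^\ell\le c_0 D^g$) more explicit than the paper does.
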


Like in \cite{MR84k:10032}, \S~2.b, we could replace the zero estimate by an interpolation lemma due to D.W.~Masser
(\cite{MR702196} and Theorem 2.1 of \cite{MR84k:10032}). The idea is just to consider the transposed matrix. 

Coming  back to the notations of \S~\ref{Section:FurtherNotations} (recall in particular the integer $M>0$ introduced in \S~\ref{Subsubsection:p-adicTheta}), we deduce from Lemma \ref{Lemma:ZeroEstimate}:

\begin{corollary}\label{Corollary:InterpolationDeterminant}
There exist two integers $c_1>1$ and $N_0>1$, depending on  $A$ and $\gamma_1,\ldots,\gamma_\ell$, with the following property: if $N$ is  a positive integer with $N\ge N_0$ and if we set 
$$
L=N^{\ell g},\quad T=N^\ell, \quad S=c_1 N^g,
$$
then there exists  a subset  $\calS=\{ \us_1,\ldots,\us_L\}$ of $\bZ^{\ell}(S)$ with $L$ elements $\us_\mu=(s_{\mu,j})_{1\le j\le \ell}$ ($1\le \mu\le L$), such that $M|s_{\mu,j}$ for $1\le j\le \ell$ and  $1\le \mu\le L$, 
and such that  the determinant
$$
\Delta=\det 
\left(
\gamma_{\us 1}^{t_1}\cdots\gamma_{\us g}^{t_g}
\right)_{
\us\in \calS , \; \ut\in\bZ^g(T)}
$$
does not vanish.

\end{corollary}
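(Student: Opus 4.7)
The plan is to argue by contradiction, invoking the zero estimate (Lemma~\ref{Lemma:ZeroEstimate}). Let $\mathcal{R}\subset\bZ^\ell(S)$ denote the set of $\us=(s_1,\ldots,s_\ell)$ with $M\mid s_j$ for each $j$, and form the rectangular matrix
$$
\mathcal{M}=\left(\gamma_{\us 1}^{t_1}\cdots\gamma_{\us g}^{t_g}\right)_{\us\in\mathcal{R},\;\ut\in\bZ^g(T)}.
$$
Choosing $c_1$ to be a positive multiple of $M$ with $c_1\ge M$, we have $|\mathcal{R}|=(c_1/M)^\ell N^{g\ell}\ge L$, while the column index set has cardinality $T^g=N^{g\ell}=L$. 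If the conclusion of the corollary failed, every $L\times L$ minor of $\mathcal{M}$ would vanish, so $\mathrm{rank}\,\mathcal{M}<L$; the $L$ columns of $\mathcal{M}$ would then be $\bQ$--linearly dependent, producing coefficients $(c_{\ut})_{\ut\in\bZ^g(T)}$, not all zero, with
$$
\sum_{\ut\in\bZ^g(T)} c_{\ut}\,\gamma_{\us 1}^{t_1}\cdots\gamma_{\us g}^{t_g}=0\qquad\text{for every }\us\in\mathcal{R}.
$$

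Set $P(Y_1,\ldots,Y_g)=\sum_{\ut} c_{\ut}\,Y_1^{t_1}\cdots Y_g^{t_g}\in\bQ[Y_1,\ldots,Y_g]$, which is nonzero of total degree $D\le g(T-1)<gN^\ell$, and homogenize it to $\tilde P\in\bQ[Y_0,\ldots,Y_N]$ of the same degree $D$. Because the functions $X_1/X_0,\ldots,X_g/X_0$ were chosen algebraically independent on $A$ (\S~\ref{Subsubsection:ProjectiveEmbedding}), the hypersurface $Z(\tilde P)$ does not contain $A(\bQ)$. On the other hand, $\tilde P$ vanishes at $\iota(\gamma_{\us})$ for every $\us\in\mathcal{R}$, i.e.\ at each point of the box of size $S'=c_1N^g/M$ attached to the still $\bZ$--linearly independent elements $M\gamma_1,\ldots,M\gamma_\ell$ of $A(\bQ)$.

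Applying Lemma~\ref{Lemma:ZeroEstimate} to $\tilde P$ and these rescaled generators then gives
$$
gN^\ell>D>c\left(\frac{S'}{g}\right)^{\ell/g}=c\left(\frac{c_1}{gM}\right)^{\ell/g}N^\ell,
$$
which is impossible as soon as $c_1$ is taken large enough, depending only on $c$, $g$, $\ell$, $M$ (concretely $c_1>gM(g/c)^{g/\ell}$, rounded up to a multiple of $M$). This contradiction forces the existence of the desired subset $\calS$ for every $N\ge N_0$, with $N_0$ absorbing any ``$N$ large'' threshold from the zero estimate. The only delicate point is the divisibility condition $M\mid s_{\mu,j}$: it compels us to apply the zero estimate to the finite--index sublattice generated by $M\gamma_1,\ldots,M\gamma_\ell$ rather than to $\Gamma$ itself, at the cost of a factor $M$ in the effective box size. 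Everything else reduces to exploiting the algebraic independence of the affine coordinates on $A$ and tracking constants.
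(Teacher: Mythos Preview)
Your proof is correct and follows essentially the same approach as the paper's: both reduce to showing that the rectangular matrix has rank $L$ by assuming a nontrivial linear relation among the $\ut$--indexed vectors, interpreting the coefficients as a nonzero polynomial in $X_1,\ldots,X_g$ vanishing on the box $\Gamma'(S/M)$ for the rescaled generators $M\gamma_1,\ldots,M\gamma_\ell$, and deriving a contradiction from Lemma~\ref{Lemma:ZeroEstimate} once $c_1>gM(g/c)^{g/\ell}$. The only cosmetic difference is that you index rows by $\us$ and columns by $\ut$ (the transpose of the paper's convention), and you spell out a couple of points the paper leaves implicit---the homogenization step, the reason $Z(\tilde P)\not\supset A(\bQ)$, and the divisibility $M\mid c_1$ ensuring $|\mathcal R|\ge L$.
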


\begin{proof}
Consider the 
matrix
$$
\left(
\gamma_{\us 1}^{t_1}\cdots\gamma_{\us g}^{t_g}
\right)_{\ut,\us},
$$
where the index of rows is  $ \ut\in\bZ^g(T)$, while the index of columns $\us$ runs over the elements in $ \bZ^\ell(S)$ for which $M$ divides $s_j$.
Our goal is to prove that this matrix 
has maximal rank $L$.
Consider a system of relations  among the rows of the matrix 
$$
\sum_{\ut\in\bZ^g(T)} p_{\ut} \gamma_{\us 1}^{t_1}\cdots\gamma_{\us g}^{t_g}=0 \qquad (\us\in \bZ^\ell(S),\; M|s_j)
$$
with $p_{\ut } \in k$ for all $\ut\in\bZ^g(T)$.
The polynomial 
$$
\sum_{\ut\in\bZ^g(T)} p_{\ut}  X_1^{t_1}\cdots X_g^{t_g}
$$
has degree $\le T$ in each of the variables $X_1,\ldots,X_g$ and vanishes at all points of $\gamma_{\us}\in \Gamma(S)$ for which $M|s_j$ ($1\le j\le \ell$). Use Lemma  \ref{Lemma:ZeroEstimate} with $\gamma_1,\ldots,\gamma_\ell$ replaced by
$M\gamma_1,\ldots,M\gamma_\ell$.  Taking $c_1>M g (g/c)^{g/\ell}$, so that $gN^\ell < c (c_1N^g/ g M)^{\ell/g}$,  it follows that 
 this polynomial is $0$, hence $p_{\ut}=0$ for all $\ut \in\bZ^g(T)$. 
\end{proof}

\section{Upper bound for the height and lower bound for the absolute value of the interpolation determinant}\label{Section:LowerBound}

Under the assumptions of Theorem \ref{Theorem:MainTranscendenceResult}, we give an upper bound for the height of the determinant $\Delta$ introduced in Corollary \ref{Corollary:InterpolationDeterminant}.

\begin{proposition}\label{Proposition:UpperBoundHeightInterpolationDeterminant}
There exists a positive integer $c_2>1$, depending on   $A$ and $\gamma_1,\ldots,\gamma_\ell$,  such that, for all $N\ge N_0$, 
$$
\rmh(\Delta)\le c_2LTS^2.
$$

\end{proposition}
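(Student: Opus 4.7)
The proof will be a direct application of Lemma~\ref{Lemma:Height} combined with the N\'eron--Tate bound of Lemma~\ref{Lemma:NeronTateHeight}. View the determinant $\Delta$ as the value at $\ugamma=(\gamma_{\us\nu})_{\us\in\calS,\,1\le\nu\le g}$ of a polynomial with integer coefficients in $Lg$ variables. Explicitly, writing $\calS=\{\us_1,\ldots,\us_L\}$ and $\bZ^g(T)=\{\ut_1,\ldots,\ut_L\}$, the Leibniz expansion gives
$$
\Delta=\sum_{\sigma\in\mathfrak{S}_L}\mathrm{sgn}(\sigma)\prod_{\mu=1}^L \gamma_{\us_\mu 1}^{t_{\sigma(\mu),1}}\cdots\gamma_{\us_\mu g}^{t_{\sigma(\mu),g}}.
$$
Thus the polynomial $P(\uX)$ defining $\Delta$ has length $\rmL(P)\le L!$ and, for each fixed $\mu$, has total degree at most $g(T-1)<gT$ in the $g$ variables $(X_{\mu,1},\ldots,X_{\mu,g})$ associated to $\us_\mu$, since each exponent $t_{\sigma(\mu),i}$ satisfies $0\le t_{\sigma(\mu),i}<T$.

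Next I would apply Lemma~\ref{Lemma:Height} with the grouping indexed by $\mu=1,\ldots,L$, with $\nu_\mu=g$ variables per group and partial degree bound $N_\mu=gT$. This yields
$$
\rmh(\Delta)\le\log L!+gT\sum_{\us\in\calS}\rmh(1\colon\gamma_{\us 1}\colon\cdots\colon\gamma_{\us g}).
$$
Since adjoining extra coordinates can only increase the projective height, $\rmh(1\colon\gamma_{\us 1}\colon\cdots\colon\gamma_{\us g})\le\rmh(1\colon\gamma_{\us 1}\colon\cdots\colon\gamma_{\us N})$, and by Lemma~\ref{Lemma:NeronTateHeight} this is bounded by $cS^2$ for all $\us\in\bZ^\ell(S)$, with $c$ depending only on $A$ and $\gamma_1,\ldots,\gamma_\ell$.

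Putting the two estimates together gives
$$
\rmh(\Delta)\le\log L!+cgLTS^2.
$$
With the choices $L=N^{\ell g}$, $T=N^\ell$, $S=c_1 N^g$, the term $\log L!\le L\log L=\ell g N^{\ell g}\log N$ is dwarfed by $LTS^2\sim N^{\ell g+\ell+2g}$, so it is absorbed into the main term for $N\ge N_0$ at the cost of enlarging the constant. Choosing $c_2$ to exceed $cg$ plus a small slack for the $\log L!$ contribution yields the stated bound $\rmh(\Delta)\le c_2 LTS^2$.

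There is no real obstacle here; the one point requiring a moment of care is the bookkeeping in Lemma~\ref{Lemma:Height}, namely observing that each $\us\in\calS$ contributes an independent group of $g$ projective coordinates, and that the per-group total degree is indeed $<gT$ rather than the cruder bound $T$ applied $g$ times separately. Once this grouping is set up correctly, everything else is a mechanical application of the height machinery already assembled in Section~\ref{Section:FurtherNotations}.
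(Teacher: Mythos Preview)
Your proof is correct and follows exactly the paper's approach: view $\Delta$ via the Leibniz expansion as the value of an integer polynomial in $Lg$ variables, apply Lemma~\ref{Lemma:Height} with the $L$ groups of $g$ variables each, and feed in the N\'eron--Tate bound of Lemma~\ref{Lemma:NeronTateHeight}. You are in fact slightly more careful than the paper, which records the per-group total degree as $T$ rather than your (correct) $gT$, and you make explicit both the passage from $g$ to $N$ coordinates in the projective height and the absorption of $\log L!$; none of this affects the outcome since everything is swallowed by the constant $c_2$.
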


\begin{proof}
From Lemma \ref{Lemma:NeronTateHeight}, we deduce, for any $\us\in\bZ^\ell(S)$, 
$$ 
\rmh
\left(1: 
 \gamma_{\us 1}:\cdots:
 \gamma_{\us N}
\right)
\le c S^2.
$$
Proposition \ref{Proposition:UpperBoundHeightInterpolationDeterminant} now follows from Lemma \ref{Lemma:Height} with 
$$
\nu_1=\cdots=\nu_L=g, \quad
N_1=\cdots=N_L=T 
\quad \hbox{ and  } \quad \rmL(f)\le L!
$$
\end{proof}

Liouville's inequality  (Lemma~\ref{Lemma:InegaliteLiouville}) implies:

\begin{corollary}\label{Corollary:LowerBoundInterpolationDeterminant}
With the notations of Proposition \ref{Proposition:UpperBoundHeightInterpolationDeterminant},
$$
\log |\Delta|_p\ge -c_2LTS^2.
$$ 

\end{corollary}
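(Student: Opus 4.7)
The plan is to combine the three ingredients that are already in place: the non-vanishing of $\Delta$ (Corollary \ref{Corollary:InterpolationDeterminant}), Liouville's inequality for rational numbers (Lemma \ref{Lemma:InegaliteLiouville}), and the height bound just obtained in Proposition \ref{Proposition:UpperBoundHeightInterpolationDeterminant}. There is essentially no work to do beyond assembling these.

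First I would observe that $\Delta \in \bQ$: the entries $\gamma_{\us\nu}$ are rational by the choice of projective embedding in \S~\ref{Subsubsection:ProjectiveEmbedding}, so $\Delta$, as a polynomial with integer coefficients in these entries, is a rational number. Next, by Corollary \ref{Corollary:InterpolationDeterminant}, for $N \ge N_0$ the subset $\calS$ has been selected so that $\Delta \ne 0$. Hence Lemma \ref{Lemma:InegaliteLiouville} applies and yields
$$
\log |\Delta|_p \ge -\rmh(\Delta).
$$
Finally, substituting the upper bound $\rmh(\Delta) \le c_2 L T S^2$ from Proposition \ref{Proposition:UpperBoundHeightInterpolationDeterminant} gives the desired conclusion.

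There is no real obstacle: the whole point of having established Proposition \ref{Proposition:UpperBoundHeightInterpolationDeterminant} and Corollary \ref{Corollary:InterpolationDeterminant} beforehand is precisely that the passage to a $p$-adic lower bound via Liouville is then immediate. The only thing one has to be careful about is to invoke the non-vanishing of $\Delta$ (without which Liouville's inequality would be vacuous), which is exactly what Corollary \ref{Corollary:InterpolationDeterminant} was designed to provide.
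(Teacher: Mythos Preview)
Your argument is correct and matches the paper's approach exactly: the paper simply states that Liouville's inequality (Lemma~\ref{Lemma:InegaliteLiouville}) implies the corollary, which is precisely the combination of $\Delta\in\bQ^\times$ (from Corollary~\ref{Corollary:InterpolationDeterminant}) with the height bound of Proposition~\ref{Proposition:UpperBoundHeightInterpolationDeterminant} that you spell out.
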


\section{Analytic estimate: upper bound for the  absolute value of the interpolation determinant}\label{Section:UpperBound}

\begin{proposition}\label{Proposition:UpperBoundInterpolationDeterminant}
There exists a positive integer $c_3>1$, depending on   $A$ and $\gamma_1,\ldots,\gamma_\ell$,  such that, for all $N\ge N_0$, 
$$
\log |\Delta|_p \le -c_3L^{1+1/r}.
$$
\end{proposition}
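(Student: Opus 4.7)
The plan is to rewrite $\Delta$ as an interpolation determinant of $p$--adic analytic functions in $r$ variables and then apply the ultrametric Schwarz lemma from Corollary \ref{Corollary:UpperBoundInterpolationDeterminant}. For each multi-index $\ut = (t_1,\ldots,t_g) \in \bZ^g(T)$, introduce the power series in the $r$ variables $\uz = (z_1,\ldots,z_r)$
$$
\phi_\ut(\uz) = \theta_1(\uz)^{t_1}\cdots\theta_g(\uz)^{t_g},
$$
and for each $\us \in \calS$ set $y_\us = s_1 y_1 + \cdots + s_\ell y_\ell \in \bQ_p^r$. Using the identity $\gamma_{\us\nu} = \theta_\nu(y_\us)$ from Section \ref{Subsubsection:p-adicTheta}, one has $\Delta = \det\bigl(\phi_\ut(y_\us)\bigr)_{\ut \in \bZ^g(T),\, \us \in \calS}$, which is an $L \times L$ interpolation determinant of $L$ power series in $r$ variables evaluated at $L$ points.

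Next I prepare the input for the Schwarz lemma. Since $\theta(0) = (1:0:\cdots:0)$, each $\theta_\nu$ with $\nu \ge 1$ vanishes at the origin; therefore by shrinking $R$ if necessary (and correspondingly enlarging the $p$--adic valuation of $M$ in Section \ref{Subsubsection:p-adicTheta}), one may assume $|\theta_\nu|_R \le 1$ for every $\nu$, so that $|\phi_\ut|_R \le 1$ for every $\ut \in \bZ^g(T)$. For the evaluation points, the divisibility $M \mid s_j$ combined with the choice of $M$ gives
$$
|y_\us|_p \le \max_j |M|_p\cdot |y_j|_p =: R' < R,
$$
and the ratio $R'/R$ depends only on $A$, on $\gamma_1,\ldots,\gamma_\ell$ and on the prime $p$.

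Applying Corollary \ref{Corollary:UpperBoundInterpolationDeterminant} with $L$ functions in $r$ variables now yields
$$
|\Delta|_p \le L!\,(R'/R)^{L^{1+1/r}} \prod_{\ut} |\phi_\ut|_R \le (R'/R)^{L^{1+1/r}},
$$
since $|L!|_p \le 1$ and each $|\phi_\ut|_R \le 1$. Taking $p$--adic logarithms gives $\log|\Delta|_p \le -L^{1+1/r} \log(R/R')$, proving the proposition with any $c_3 \le \log(R/R')$. The delicate point is the preliminary normalization which ensures $|\theta_\nu|_R \le 1$; once this is arranged, the ultrametric character of the $p$--adic absolute value makes the estimate very clean, since the contribution of the sup norms $|\phi_\ut|_R$ collapses to $1$, whereas in the archimedean setting those norms would produce an additional $TL$--type term that must be compared with $L^{1+1/r}$.
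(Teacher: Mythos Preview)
Your proof is correct and follows exactly the paper's approach: the paper's own argument is the one-line instruction to apply Corollary~\ref{Corollary:UpperBoundInterpolationDeterminant} with the functions $\theta_1^{t_1}\cdots\theta_g^{t_g}$ and the points $s_{\mu 1}y_1+\cdots+s_{\mu\ell}y_\ell$, and you have simply filled in the normalizations ($|\theta_\nu|_R\le 1$ after shrinking $R$, $R'<R$ from the choice of $M$) that make the constant explicit. One small slip: in the bound of Corollary~\ref{Corollary:UpperBoundInterpolationDeterminant} the factor $L!$ is a real number $\ge 1$, not $|L!|_p$, so your justification ``since $|L!|_p\le 1$'' is a misreading; the factor is nonetheless harmless, either because in the ultrametric expansion of the determinant no $L!$ actually arises, or because $\log(L!)=O(L\log L)$ is dominated by $L^{1+1/r}$ for $N\ge N_0$.
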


\begin{proof}
Proposition \ref{Proposition:UpperBoundInterpolationDeterminant} 
follows from Corollary   \ref{Corollary:UpperBoundInterpolationDeterminant} with the set of functions 
$$
\{f_1,\ldots,f_L\}=
\left\{
\theta_1^{t_1}\cdots \theta_g^{t_g}\; ;\; 
\ut\in\bZ_{\ge 0}(T)
\right\}
$$
and the points $\uz_\mu=s_{\mu 1} y_1+\cdots+s_{\mu \ell} y_\ell$
($1\le \mu\le L$).
\end{proof}

\section{Proof of  the main transcendence result}\label{Section:Proof}

\begin{proof}[Proof ot Theorem \ref{Theorem:MainTranscendenceResult}]

Since $TS^2=c_1^2 L^{(1/g)+(2/\ell)}$, the conclusions of corollary \ref{Corollary:LowerBoundInterpolationDeterminant} and proposition \ref{Proposition:UpperBoundInterpolationDeterminant} imply 
$$ 
\frac{1}{r}\le \frac{1}{g}+\frac{2}{\ell} \cdotp
$$
\end{proof}

\section{Remarks}\label{Section:Remarks}

\null\hskip - 1 true cm 
$\bullet$
{\bf \ref{Section:Remarks}.1.} 
In place of the rational number field and the prime number $p$, one may work with an algebraic number field and a finite place $v$, replacing $\bQ_p$ with the completion $k_v$. 
One main difference is in \S~\ref{Subsubsection:Height},
where, in the case of a number field, one needs to introduce  height functions on the field of algebraic numbers in place of the rational number field. 
See \cite{MR1756786} Chap.~3 \S~2, \cite{MR1745599}, \S~B.2, \cite{MR1757192},  Chap.~2,  \cite{MR2216774} Chap.~1, \cite{MR2465098} Chap.~4.

As pointed out in \cite{Poonen} (Remark 6.4), one cannot  deduce the general case of a number field from the special case of the rational numbers by means of the restriction of scalars. 

\noindent
\null\hskip - 1 true cm 
$\bullet$
{\bf \ref{Section:Remarks}.2.} 
As mentioned in  \cite{MR84j:10046} (\S~6a p.~643), similar results hold when the simple Abelian variety $A$ is replaced by a commutative algebraic group $G$. There is a condition in  \cite{MR84j:10046} for the ultrametric case that a subgroup of finite index of $\Gamma$ is contained in a compact subgroup of $A(k_v)$ -- for an Abelian variety $A$, the group  $A(k_v)$ is compact and this condition is always satisfied.

Let us write, like in \cite{MR84j:10046}, $G=\bG_A^{d_0}\times \bG_m^{d_1} \times  G'$, where $G'$ has dimension $d_2$ (and therefore  $G$ has dimension $d=d_0+d_1+d_2$). Roughly speaking,  in this general sitting, one replaces 
$$
\frac{\ell g}{\ell+2g}
\quad\hbox{by}\quad
\frac{\ell d}{\ell+d_1+2d_2}\cdotp
$$
However, one needs to take into account possible degeneracies occurring from the algebraic subgroups of $G$. We refer to \cite{MR84j:10046} for precise statements. 

In the case of a power of the multiplicative group $G= \bG_m^{d}$, the transcendence result yields lower bounds for the $p$--adic rank of the units of an algebraic number field (namely partial results towards Leopoldt's Conjecture).

\noindent
\null\hskip - 1 true cm 
$\bullet$
{\bf \ref{Section:Remarks}.3.}  
Following \cite{Poonen}, consider a commutative  algebraic group $G$ over $\bQ$ and a finitely generated subgroup $\Gamma$ of $G(\bQ)$ contained in the union of compact subgroups of $\bG(\bQ_p)$. The number $\dim(\Gammabar)$ can be defined exactly like in \S~\ref{Subsection:p-adicLogarithm} as the dimension of the $\bZ_p$--submodule of the tangent space at the origine $\Lie(G)$ spanned by the image of $\Gamma$ under the logarithmic map. Another function $d(\Gamma)$ of $\Gamma$ is introduced by B.~Poonen in    \cite{Poonen}:
$$
d(\Gamma):=\min_{H\subset G}\bigl\{  \dim H+\rk_\bZ(\Gamma/\Gamma\cap H ) \bigr\},
$$
where the minimum is over all algebraic subgroups $H$ of $G$ over $\bQ$.
The inequality 
$\dim(\Gammabar)\le d(\Gamma)$ is always true. 
Here is an example where this inequality is strict (compare with  Langevin's example in \cite{MR1428494} p.~1201 and 1209 for $\bG_m^3$). 
Consider an elliptic curve $E$ over $\bQ$ with  three linearly independent algebraic points  $\gamma_1, \gamma_2, \gamma_3$ in $E(\bQ)$. Let  $\Gamma$  be the subgroup of $E^3(\bQ)$ generated by $(0,\gamma_3, -\gamma_2)$, $(-\gamma_3,0, \gamma_1)$, $(\gamma_2, -\gamma_1,0)$. Then  $\dim\Gammabar=2$, 
while  $d(\Gamma)=3$. 

To produce a lower bound for the $p$--adic rank amounts to produce lower bounds for the rank of certain matrices whose entries are $p$--adic logarithms of algebraic points. From a conjectural point of view, the answer is given by the {\it structural rank} introduced  by D.~Roy. See \cite{MR1756786} for the case of linear algebraic groups.

\noindent
\null\hskip - 1 true cm 
$\bullet$
{\bf \ref{Section:Remarks}.4.} 
Further applications of the algebraic subgroup theorem in the ultrametric case are given by D.~Roy in \cite{MR1368440}.

\noindent
\null\hskip - 1 true cm 
$\bullet$
{\bf \ref{Section:Remarks}.5.} 
Our $p$--adic result Theorem \ref{Theorem:MainTranscendenceResult} is an ultrametric version of \cite{MR84j:10046,MR1341724} (see also \cite{MR1245821}). In the Archimedean case, quantitative refinements are given in \cite{MR1711387}, they are based on the results of  \cite{MR1491809}. See also \cite{arXiv:1007.0593}. Since the method is ``effective'', it is also possible to produce quantitative  refinements of  Theorem \ref{Theorem:MainTranscendenceResult}.

\noindent
\null\hskip - 1 true cm 
$\bullet$
{\bf \ref{Section:Remarks}.6.} 
An alternative proof of the main result (Theorem \ref{Theorem:MainTranscendenceResult}) can be given by means of  Arakelov's geometry and Bost slope inequality. See the papers by J.B.~Bost \cite{MR1423622} and A.~Chambert--Loir \cite{MR1975179}.

\bibliographystyle{siam} %


	  

\vfill

 \vskip 2truecm plus .5truecm minus .5truecm 

\font\ninerm=cmr10 at 9pt

\font\ninett=cmtt10 scaled 900 

\hfill
\vbox{\ninerm
 \hbox{Michel WALDSCHMIDT}
 \hbox{Universit\'e P.~et M.~Curie (Paris VI)}
 \hbox{Institut de Math\'ematiques de Jussieu -- CNRS UMR 7586}           
 \hbox{4, Place Jussieu } 
 \hbox{F--75252 PARIS Cedex 05 FRANCE}
 \hbox{e-mail: 
\href{mailto:miw@math.jussieu.fr}{miw@math.jussieu.fr}}
 \hbox{
URL:
\href{http://www.math.jussieu.fr/~miw/}%
{http{$:$}//www.math.jussieu.fr/${\scriptstyle \sim}$miw/}}
} 

\vfill

\end{document}